\DeclareMathAlphabet{\pazocal}{OMS}{zplm}{m}{n}
\newcommand{\Ub}{\pazocal{U}}
\newcommand{\Zb}{\pazocal{Z}}
\newcommand{\Hb}{\pazocal{H}}
\newcommand{\Pb}{\pazocal{P}}
\newcommand{\Fb}{\pazocal{F}}
\newcommand{\Db}{\pazocal{D}}
\newtheorem{theorem}{Theorem}[section]
\newtheorem{remark}{Remark}
\numberwithin{equation}{section}
\begin{document}
\title[Stratified equatorial flows in cylindrical coordinates with surface tension]{Stratified equatorial flows in cylindrical coordinates with surface tension}
\author[G. Cristina]{G. Cristina}
\address{G. Cristina, Faculty of Mathematics and Computer Science, Babeș-Bolyai
University,
400084 Cluj-Napoca, Romania}
\email{cristina.gheorghe@ubbcluj.ro}
\author[A. Stan]{Andrei Stan}
\address{A. Stan, Faculty of Mathematics and Computer Science, Babeș-Bolyai
University,
400084 Cluj-Napoca, Romania \& Tiberiu Popoviciu Institute of Numerical
Analysis, Romanian Academy, P.O. Box 68-1, 400110 Cluj-Napoca, Romania}
\email{andrei.stan@ubbcluj.ro}

\begin{abstract}
  This paper considers a mathematical model of steady flows of an inviscid and incompressible fluid moving in the azimuthal direction. The water density varies both in depth and latitude and the waves are propagating under the force of gravity, over a flat bed and with a free surface, on which acts a force of surface tension. Our solution pertains to large scale equatorial dynamics of a fluid with free surface expressed in cylindrical coordinates. We also prove a regularity result for the free surface.
\end{abstract}

\maketitle

\bigskip

\textit{Key words and phrases}:  Azimuthal flows, Surface tension, General density, Implicit function theorem, Coriolis
force, Cylindrical coordinates

\textit{Mathematics Subject Classification} (2010): 35Q31, 35Q35, 35Q86, 35R35, 26B10

\section{Introduction}

Our aim is to study equatorial flows
exhibiting general (continuous) stratification depending on depth and
latitude. This work is motivated by the intriguing features observed in the Equatorial region, situated approximately 2° latitude from the Equator. In this area, strong vertical ocean stratification prevails, surpassing other ocean zones and creating a distinct thermocline—a sharp interface separating warmer, less dense surface water from colder, denser water below (see \cite{const2016steady, feodorov2009eq,izumo2005,johnson2001velocity, Kessler1995OceanicEW,mccreary1985modeling}). Additionally, the equatorial region showcases underlying currents, including a westward drift near the surface triggered by prevailing trade winds and the presence of the Equatorial Undercurrent (EUC), an eastward pointing stream residing on the thermocline. These characteristics contribute to the complexity of large-scale ocean motions, emphasizing the importance of understanding fluid density variations in both depth and latitude, primarily influenced by temperature and, to a lesser extent, salinity (see, e.g., \cite{Kessler1995OceanicEW, garrison2009essentials,mccreary1985modeling}). 

Our goal in this study is to establish the existence of an exact solution within a cylindrical frame to the highly nonlinear and intractable equations of geophysical fluid dynamics (GFD). In this context, we would like to mention that, recently, a number of exact solutions of the GFD governing equations were devised, cf. \cite{const2012exact, const2013nonlinear, const2014nonhydrostatic,const2015dynamicswaves, const2016steady, const2016Antarctic, const2017Pacific, Matioc_2012, HENRY20196788,henry2019azimuthal,hsu2015exact, IONESCUKRUSE2015190}. The solution characterizes a flow propagating in the azimuthal direction, influenced by both gravity and surface tension, while maintaining variability in density across all directions. This form of density distribution proves sufficiently general to capture and model most geophysical fluid dynamics phenomena \cite{vallis2017atmospheric}.

The Equatorial motion has been the subject of previous studies, where has been proven the existence of exact solutions  under various contexts (see, e.g., \cite{Henry,Hsu, const2012modeling,const2012exact,const2013nonlinear}). The novelty of this paper lies in deducing an existence result, considering both surface tension and density variation. While comparable results, utilizing cylindrical coordinates, have been achieved in   \cite{Henry} 
 and \cite{Hsu}, it is noteworthy that these outcomes are exclusively applicable to one of the two cases mentioned earlier.

\section{Preliminaries}

%\subsection*{Introduction}
% \subsection{Governing equations}
%(Dynamic boundary condition with surface tension included)
 Considering the line of Equator "straightened" and replaced  to the z axis, and the body of the sphere, represented by a circular disc described by radius r and the angle of deflection from the equator, $\theta \in \displaystyle \left(-\frac{\pi}{2}, \frac{\pi}{2}\right)$, the fluid motion can be described in cylindrical coordinates. We can provide a right-handed rotating system of coordinates (r, $\theta$, z), where r measures the distance from the center of the disc (physically, it means the distance to the center of the Earth), $\theta$ is increasing from north to south and the z axis points positively from west to east. Note that $\theta=0$ corresponds to the line of Equator. The associated unit vectors in the (r, $\theta$, z) system are ($\vec{e_r}, \vec{e_{\theta}}, \vec{e_z}$) and the velocity corresponding components are (u,v,w). \\
\indent The equations describing the motion of an inviscid and incompressible flow  are comprised in the  Euler equations and, respectively,  mass conservation equation. In  cylindrical coordinates, they are given by
\begin{equation}\label{eq:1}
\begin{cases}
\displaystyle u_t+uu_r+\frac{v}{r}u_{\theta}+w u_z-\frac{v^2}{r}=-\frac{1}{\rho}p_r+F_r\\
~\\
\displaystyle v_t+uv_r+\frac{v}{r} v_\theta+wvz+\frac{uv}{r}=-\frac{1}{\rho}\frac{1}{r}p_\theta+F_\theta\\
~\\
\displaystyle w_t+uw_r+\frac{v}{r}+w_\theta+ww_z=-\frac{1}{\rho}p_z+F_z,
\end{cases}
\end{equation}
and \begin{equation} \label{eq:2}
\frac{1}{r}\frac{\partial }{\partial r} (r \rho u)+\frac{1}{r}\frac{\partial}{\partial \theta}(\rho v)+\frac{\partial}{\partial z}(\rho w)=0,
\end{equation}
respectively. We denote $\rho$ as the fluid density, $p$ as the pressure in the fluid, and ($F_r, F_\theta, F_z$) as the body-force vector, with these quantities depending on the variables $r$, $\theta$, and $z$.

To ensure a precise examination of fluid dynamics in specific locations, it is essential to account for the Earth's rotation effects. The additional terms to incorporate into the expression of the equations of Euler are related to the Coriolis force $ 2\vec{\Omega}\times \vec{u}$
and the centripetal acceleration $\vec{\Omega}\times (\vec{\Omega}\times\vec{r})$,
where
\begin{align*}
    &\vec{\Omega}=-\Omega((\sin \theta) \vec{e_r}+(\cos \theta)\vec{e_\theta}),\\&
\vec{u}=u \vec{e_r}+v\vec{e_\theta}+w\vec{e_z},\\&
\vec{r}=r\vec{e_r}.
\end{align*}
Here, $\Omega\approx 7.29 \times 10^{-5} rad/s$ stands for   the constant rate of rotation of the Earth.

The two forces mentioned earlier sum up in the Euler equations. Thus, since the gravity is given by the vector $(-g,0,0)$, $2\vec{\Omega}\times \vec{u}=2\Omega(-w\cos\theta, w\sin\theta, u\cos\theta-v\sin\theta)$ and $\vec{\Omega}\times(\vec{\Omega}\times\vec{r})=r\Omega^2\cos\theta(-\cos\theta,\sin\theta,0),$ one has

\begin{equation}\label{eq:5}
\begin{cases}
\displaystyle u_t+uu_r+\frac{v}{r}u_{\theta}+w u_z-\frac{v^2}{r}-2\Omega w\cos\theta-r\Omega^2\cos^2\theta=-\frac{1}{\rho}p_r-g\\
~\\
\displaystyle v_t+uv_r+\frac{v}{r} v_\theta+wvz+\frac{uv}{r}+2\Omega w\sin\theta+r\Omega^2\sin\theta\cos\theta=-\frac{1}{\rho}\frac{1}{r}p_\theta\\
~\\
\displaystyle w_t+uw_r+\frac{v}{r}+w_\theta+ww_z+2\Omega(u\cos\theta-v\sin\theta)=-\frac{1}{\rho}p_z.
\end{cases}
\end{equation}
Besides the mass conservation and Euler equations, the water motion is also subject to boundary conditions. Considering that the fluid extends to infinity in all horizontal directions, there are two boundaries: the rigid flat bed and the free surface of the water.  
On the bed $r=d(\theta,z)$, we have the  kinematic boundary condition \begin{equation}\label{eq:7}
u=w d_z+\displaystyle\frac{1}{r} v d_\theta.
\end{equation}
On the free surface $r=R+h(\theta,z)$, along with the kinematic boundary condition \begin{equation}\label{eq:6}
u=w h_z+\displaystyle\frac{1}{r} v h_\theta,
\end{equation}
we also have the dynamic condition \begin{equation}\label{eq:8}
p=P(\theta,z)+\sigma\nabla\cdot \vec{n},
\end{equation}
where $\sigma$ is the coefficient of surface tension and $\vec{n}$ is the unit normal vector, pointing outward. The function $h$ is unknown and will be determined later.

\section{Main result}

% \subsection*{Solutions of the governing equations}
% ~ \indent

% Our aim is to find a formula for the pressure function, considering the governing equations, that represents a stratified, steady, equatorial flow, invariant in azimuthal direction. We focus on the purely  azimuthal flow, so the velocity field $(u,v,w)$ satisfies

Our aim is to determine the pressure under the additional assumption that the flow is purely azimuthal and invariant in this direction, i.e.,
\begin{equation}\label{eq:9}
u=v=0 \text{ and } w=w(r,\theta).
\end{equation} 
Consequently, since the dependence in the "z" direction is eliminated, relation \eqref{eq:8} has the form
\begin{equation} \label{eq:10}
p=P(\theta)+\sigma\nabla\cdot \vec{n}.
\end{equation}
Additionally, both the free surface and the bed will be characterized by $r=R+h(\theta)$ and $r=d(\theta)$, respectively.
 \begin{remark}
 To account for geophysical factors in equatorial flows, the polar angle $\theta$ is confined to an interval of the form $\left[-\varepsilon,\varepsilon\right ]$.
Selecting  $\varepsilon=0.016$, it corresponds to a strip approximately 100 km wide centered around the equator.
 \end{remark}From  \eqref{eq:9}, the Euler equations becomes
 \begin{equation}\label{eq:11}
\begin{cases}
-2\Omega w\cos\theta-r\Omega^2\cos^2\theta=-\displaystyle \frac{1}{\rho}p_r-g\\
2\Omega w\sin\theta+r\Omega^2\sin\theta\cos\theta=\displaystyle  -\frac{1}{\rho}\frac{1}{r}p_\theta\\
\quad\indent\quad\indent\indent 0=-\displaystyle\frac{1}{\rho}p_z,
\end{cases}
\end{equation}
or equivalently 
\begin{equation}\label{eq:12}
\begin{cases}
\rho(r,\theta)\Omega\cos\theta(2w(r, \theta)+r\Omega\cos\theta)=p_r+g\rho(r, \theta)\\
\rho(r,\theta) r \Omega\sin\theta(2w(r, \theta)+r\Omega\cos\theta)=-p_\theta\\
\quad\indent\quad\indent\indent\quad\indent 0=p_z.
\end{cases}
\end{equation}
Letting $\Ub (r,\theta):=2w(r,\theta)+r\Omega\cos\theta$, if we derive the first equation with respect to $\theta$ and the second one with respect to r, we obtain
\[
\left\{ \begin{aligned} 
 \displaystyle (\rho \Omega\Ub\cos\theta)_\theta &= \displaystyle p_{r\theta}+g\rho_\theta\\
  \displaystyle(\rho r \Omega\Ub\sin\theta)_r &= \displaystyle -p_{\theta r}
\end{aligned} \right.
\]
Next, we sum up the two equations to deduce
\[(\rho\Omega\Ub\cos\theta)_\theta+(\rho r \Omega\Ub\sin\theta)_r=g\rho_\theta, \]
which yields
\begin{equation} \label{eq:14}
(r\sin\theta)\Zb_r(r,\theta)+(\cos\theta)\Zb_\theta(r,\theta)=(r\cos\theta)g\rho_\theta,
\end{equation}
where $\Zb(r, \theta):=\rho r\Omega\Ub\cos\theta$.

Following the method of characteristics, we look for solutions $s\rightarrow \tilde{r}(s)$ and $s\rightarrow\tilde{\theta}(s)$ such that
\begin{align}\label{eq:15}
& \widetilde{r}~'(s)=\widetilde{r}(s)\sin\widetilde{\theta}(s),\\&  \widetilde{\theta}'(s)=\cos\widetilde{\theta}(s). \nonumber
\end{align}
Thus,  relation  \eqref{eq:14} is equivalent to
\begin{equation}\label{eq:16}
\displaystyle\frac{d}{ds}\big(\Zb(\widetilde{r}(s),\widetilde{\theta}(s))=(\widetilde{r}(s)\cos\widetilde{\theta}(s))g\rho_\theta(\widetilde{r}(s_, \widetilde{\theta}(s)).
\end{equation}
Simple computations yield $\widetilde{r}''(s)=\widetilde{r}(s)$, leading to the solution \begin{equation}\label{eq:17}
\widetilde{r}(s)=c_1e^{s}+c_2e^{-s},
\end{equation}
where $c_1, c_2 $ are some real constants. Now, since  $\widetilde{r}^\prime(s)=c_1e^{s}-c_2e^{-s}$, we infer \begin{equation}\label{eq:18}
\widetilde{\theta}(s)=\arcsin\displaystyle\frac{c_1e^{s}-c_2e^{-s}}{c_1e^{s}+c_2e^{-s}}=\arcsin\displaystyle\frac{c e^{2s}-1}{c e^{2s}+1},
\end{equation}
where $c=\displaystyle\frac{c_1}{c_2}$ is constant.
Returning to our equation \eqref{eq:14}, we note that 
\begin{equation}\label{rcos constant}
\displaystyle\frac{d}{ds} (\widetilde{r}(s) \cos\widetilde{\theta}(s))=0.
\end{equation} 
Assuming, without loss of generality, that $\widetilde{\theta}(0)=0$, we find that $c=1$, implying $c_1=c_2$. For given values of $r$ and $\theta$, we seek $s_0 \in \mathbb{R}$ such that
\begin{equation}\label{eq:19}
\widetilde{\theta}(s_0)=\theta, \widetilde{r}(s_0)=r.
\end{equation}
The initial condition in \eqref{eq:19} leads to $\displaystyle\frac{e^{2s_0}-1}{e^{2s_0}+1}=\sin\theta$, thus
  \begin{equation}
  s_0 (\theta)= \displaystyle \frac{1}{2}\ln \displaystyle\frac{1+\sin\theta}{1-\sin\theta},
  \end{equation}
  while the second condition from \eqref{eq:19} leads to 
  \begin{equation*}
      c_1\bigg(\sqrt{\displaystyle\frac{1+\sin\theta}{1-\sin\theta}}+\sqrt{\displaystyle\frac{1-\sin\theta}{1+\sin\theta}}\bigg)=r,
  \end{equation*} which implies that
  \begin{equation}
  c_1=\displaystyle\frac{r\cos\theta}{2}=c_2.
  \end{equation}
Consequently, relations  \eqref{eq:15} are given by
  \begin{align}
 & \widetilde{r}(s)=\displaystyle\frac{r\cos\theta}{2}(e^s+e^{-s})=r\cos\theta\cosh(s),
 \\& \widetilde{\theta}(s)=\arcsin\bigg(\displaystyle\frac{e^{2s}-1}{e^{2s}+1}\bigg)=\arcsin(\tanh(s)).
  \end{align}
From \eqref{rcos constant}, we immediately derive
  \begin{equation}\label{eq:20}
\widetilde{r}(s)\cos\widetilde{\theta}(s)=\widetilde{r}(0)\cos\widetilde{\theta}(0)=r\cos\theta\cos(0)=r\cos\theta,
  \end{equation}
Whence, integrating   \eqref{eq:16}  from $0$ to $s_0(\theta)$, we deduce
  \[\Zb(r,\theta)-\Zb(r\cos\theta, 0)=gr\cos\theta\int_0^{s_0(\theta)} \rho_\theta(\widetilde{r}(s), \widetilde{\theta}(s)) ds.\]
Now, from the definitions of $\Zb$ and $\Ub$, we determine the expression for the velocity $w(r,\theta)$, that is,
 \begin{equation}\label{eq:21}
 w(r,\theta)=-\displaystyle\frac{\Omega r\cos\theta}{2}+\frac{1}{2\rho\Omega}\left(\displaystyle\frac{F(r\cos\theta)}{r\cos\theta}+g\int_0^{s_0(\theta)}\rho_\theta(\widetilde{r}(s), \widetilde{\theta}(s))ds\right),
 \end{equation}
 where $F(x):=\Zb(x,0)$ is an arbitrarily smooth function.
 
To derive the expression for pressure $p$,  the first equation of \eqref{eq:11} yields
\begin{align}\label{eq:22}
p_r&= \nonumber -g\rho+\Omega\rho\Ub\cos\theta \\&= -g \rho+\displaystyle\frac{F(r\cos\theta)}{r}+g\cos\theta\int_0^{s_0(\theta)}\rho_\theta(\widetilde{r}(s),\widetilde{\theta}(s))ds   ,
 \end{align}
while from the second one we infer
 \begin{align}\label{eq:23}
 p_\theta&=- \nonumber \Omega r\rho \Ub\sin\theta\\&=-\tan \theta\Bigg[F(r\cos\theta)+gr\cos\theta\int_0^{s_0(\theta)}\rho_\theta(\widetilde{r}(s),\widetilde{\theta}(s))ds\Bigg].
 \end{align}
If we integrate \eqref{eq:22} with respect to r, and use the substitution $y=r\cos \theta$, we obtain
 \begin{equation}\label{eq:24} 
 % aici e un mare semn de intrebare
 p(r, \theta)=-g\int_a^r\rho(\xi,\theta)d\xi+\int_{a\cos\theta}^{r\cos\theta}\Bigg[\displaystyle\frac{F(y)} {y}+H(y,\theta)\Bigg]dy+\widetilde{C}(\theta),
 \end{equation}
 where  $a$ is a constant,  $\theta\rightarrow \widetilde{C}(\theta)$ is a smooth function and
 \begin{equation}\label{eq:25}
 H(y,\theta)=\int_0^{s_0(\theta)} g\rho_\theta (y \cosh(s), \widetilde{\theta}(s))ds.
 \end{equation}
 In equation \eqref{eq:24}, differentiating with respect to $\theta$ yields the expression for $p_\theta$,
\begin{align*}
    p_\theta&=-g\int_a^r \rho_\theta(\xi, \theta) d\xi- \tan \theta\left(F(r\cos\theta)-F(a\cos\theta)
\right)\\& \quad +\int_{a\cos\theta}^{r\cos\theta} H_\theta(y,\theta)dy- \sin\theta \left ( r\,H(r\cos\theta,\theta)-aH(a\cos\theta, \theta)\right)+\widetilde{C}'(\theta).
\end{align*}
Since 
 \begin{equation}\label{eq:26}
 H_\theta(y, \theta)=g\rho_\theta(y\cosh (s_0(\theta)), \widetilde{\theta}(s_0(\theta)))\cdot \frac{1}{\cos\theta}, 
 \end{equation}
 and
\begin{equation*}
    \cosh (s_0)=\displaystyle\frac{\widetilde{r}(s_0)}{r\cos\theta}=\displaystyle\frac{1}{\cos\theta},
\end{equation*}
from \eqref{eq:19} we obtain 
 \begin{equation}\label{eq:27}
H_\theta(y,\theta)=g\rho_\theta\bigg(\displaystyle\frac{y}{\cos\theta},\theta\bigg)\cdot s_0'(\theta)=\displaystyle\frac{g}{\cos\theta}\rho_\theta\bigg(\displaystyle\frac{y}{\cos\theta},\theta\bigg).
 \end{equation}Consequently, 
 \begin{equation}\label{eq:28}
\int_{a\cos\theta}^{r\cos\theta}H_\theta(y,\theta)dy=\int_{a\cos\theta}^{r\cos\theta}\displaystyle\frac{g}{\cos\theta}\rho_\theta\bigg(\frac{y}{\cos\theta},\theta\bigg)dy=\int_a^r g\rho_\theta(\widetilde{r}, \theta)d\widetilde{r}.
 \end{equation}
Hence, we derive the expression for $p_\theta$  given by
\begin{align}
   p_\theta \nonumber  &=-F(r\cos\theta)\tan\theta+\tan \theta\,F(a\cos\theta)-r\sin\theta\, H(r\cos\theta, \theta) \\&
   \quad +a\, \sin\theta\, H(a\cos\theta, \theta) +\widetilde{C}'(\theta). \label{eq:29}
\end{align}
From the formulas \eqref{eq:29} and \eqref{eq:23}, we deduce
 \[\widetilde{C}'(\theta)+\tan\theta\,F(a\cos\theta)+
 a\sin\theta\, H(a\cos\theta, \theta)=0,\]
 which gives,
 \begin{equation}\label{eq:30}
 \widetilde{C}(\theta)=A-\int_0^\theta \tan \xi\,F(a\cos\xi) d\xi-a\int_0^\theta \sin\xi \,H(a\cos\xi,\xi) d\xi.
 \end{equation}
From all the previous results, we derive the formula for the pressure,
\begin{align}\label{eq:31}
     p(r, \theta)&=A-g\int_a^r\rho(\xi,\theta)d\xi+\int_{a\cos\theta}^{r\cos\theta}\Bigg[\displaystyle\frac{F(y)}{y}+H(y,\theta)\Bigg]dy\\& \quad -\int_0^\theta \tan \xi \,F(a\cos\xi)d\xi-a\int_0^\theta  \sin\xi  \, H(a\cos\xi,\xi)d\xi \nonumber.
\end{align}
\subsection{On the dynamic conditions}

 The next step is to study the dynamic conditions \eqref{eq:10} on the free surface $r=R+h(\theta)$. The normal vector to a surface  $H(r,\theta, z)=0$ (with $H$ unknown), is given by
 \begin{equation}\label{eq:32}
\vec{ N}=H_r\vec{e}_r+\displaystyle\frac{1}{r}H_\theta \vec{e}_\theta+H_z\vec{e}_z.
\end{equation}  
Given that the kinematic boundary condition on the free surface implies $H(r,\theta,z)=r-R-h(\theta)$, the normal vector is
\[\vec{N}=\vec{e}_r-\displaystyle\frac{h_\theta}{r}\vec{e}_\theta\]
Therefore, the pointing unit normal vector is
\[\vec{n}=\displaystyle\frac{\vec{N}}{\lVert \vec{N} \rVert}=\displaystyle\frac{r}{\sqrt{r^2+{h_\theta}^2}}\cdot \vec{N}=\displaystyle\frac{r}{\sqrt{r^2+{h_\theta}^2}}\vec{e}_r-\displaystyle\frac{h_\theta}{\sqrt{r^2+{h_\theta}^2}}\vec{e}_\theta\]
Moreover, the divergence of $\vec{n}$ is
\begin{align}
    \nabla\cdot \vec{n}&=\displaystyle\frac{1}{r}\partial _r(r n_r)+\displaystyle\frac{1} {r}\partial_\theta(n_\theta)+\partial_z(n_z)\\& 
  =  \displaystyle\frac{1}{r}\bigg(\displaystyle\frac{r} {\sqrt{r^2+h_\theta^2}}+\frac{r h_\theta^2}{(r^2+h_\theta^2)^\frac{3}{2}}\bigg)-\displaystyle\frac{1}{r}\cdot \frac{r^2 h_{\theta\theta}}{(r^2+h_\theta^2)^\frac{3}{2}}\\& \nonumber
  =\frac{r^2+2h_\theta^2}{(r^2+h_\theta^2)^\frac{3}{2}}-\frac{r h_{\theta\theta}}{(r^2+h_\theta^2)^\frac{3}{2}}\\& 
  =
  \frac{(R+h(\theta))^2+2h_\theta^2}{((R+h(\theta))^2+h_\theta^2)^\frac{3}{2}}-\frac{h_{\theta\theta}(R+h(\theta))^2}{((R+h(\theta))^2+h_\theta^2)^\frac{3}{2}}. \label{eq:33}
\end{align}
 From the dynamic boundary condition \eqref{eq:10}, it follows that
\[
 P(\theta)=p(r,\theta)-\sigma\nabla\cdot \vec{n}.
\]
Therefore, we obtain the following Bernoulli-type relation:
\begin{align}
\label{eq:34}P(\theta)&=A-g\int_a^r\rho(\xi,\theta)d\xi+\int_{a\cos\theta}^{r\cos\theta}\Bigg[\displaystyle\frac{F(y)}{y}+H(y,\theta)\Bigg]dy\\& \quad
    -\int_0^\theta  \tan \xi\, F(a\cos\xi) d\xi -\int_0^\theta a\sin\xi \, H(a\cos\xi,\xi)d\xi \nonumber \\& \quad 
    -\sigma\cdot\Bigg[\frac{(R+h(\theta))^2+2h_\theta^2}{((R+h(\theta))^2+h_\theta^2)^\frac{3}{2}}-\frac{h_{\theta\theta}(R+h(\theta))^2}{((R+h(\theta))^2+h_\theta^2)^\frac{3}{2}}\Bigg].
\end{align}
To establish the existence of solutions, we apply the implicit function theorem. In order to accomplish this, firstly it is necessary to non-dimensionalize the equation \eqref{eq:34}. In this way, the physical quantities can be easily compared in a relevant way. Dividing throughout by atmospheric pressure $P_{atm}$ yields the problem
\begin{align}
\label{eq:35}    \Pb(\theta)&= -\frac{g}{P_{atm}}\int_a^{(1+\Hb(\theta))R} 
\rho(\xi,\theta) d\xi+\int_{a\cos\theta}^{(1+\Hb(\theta))R\cos\theta} \Bigg[\frac{F(y)}{y}+H(y, \theta)\Bigg]dy\\& 
\quad -\frac{\sigma}{RP_{atm}} \nonumber
\cdot\Bigg[\frac{(1+\Hb(\theta))^2+2\Hb_\theta^2}{((1+\Hb(\theta))^2+\Hb_\theta^2)^\frac{3}{2}}-\frac{\Hb_{\theta\theta}(1+\Hb(\theta))^2}{((1+\Hb(\theta))^2+\Hb_\theta^2)^\frac{3}{2}}\Bigg]
 +\frac{\widetilde{C}(\theta)}{P_{atm}},
\end{align}
where the non-dimensionalized functions are defined through
\[\Hb(\theta):=\frac{h(\theta)}{R}, \Pb(\theta):=\frac{P(\theta)}{P_{atm}}.
\]

\subsection{Solutions describing the
free surface}
To study the problem \eqref{eq:35}, we represent it  as a functional equation for a more straightforward analytical study, that is, we look for nontrivial solutions of the problem
\begin{equation}\label{ec implicita}
    \Fb(\Hb, \Pb)=0,
\end{equation}
where 
\begin{align}
\label{eq:36}    \Fb(\Hb, \Pb)&:=-\frac{g}{P_{atm}}\int_a^{(1+\Hb(\theta))R} 
\rho(\widetilde{r},\theta) d\widetilde{r}+\int_{a\cos\theta}^{(1+\Hb(\theta))R\cos\theta} \Bigg[\frac{F(y)}{y}+H(y, \theta)\Bigg]dy+\frac{\widetilde{C}(\theta)}{P_{atm}} \\& \quad -\frac{\sigma}{R P_{atm}}
\cdot\Bigg[\frac{(1+\Hb(\theta))^2+2\Hb_\theta^2}{((1+\Hb(\theta))^2+\Hb_\theta^2)^\frac{3}{2}}-\frac{\Hb_{\theta\theta} \nonumber(1+\Hb(\theta))^2}{((1+\Hb(\theta))^2+\Hb_\theta^2)^\frac{3}{2}}\Bigg]-\Pb(\theta).
\end{align}
Clearly, the functional  $\Fb\colon C^2[0, \varepsilon]\times C[0, \varepsilon]\rightarrow C[0,\varepsilon]$
 is a continuous and differentiable mapping. 
% where $C[0,\varepsilon]$ denotes the space of continuous functions $f:[0,\varepsilon]\rightarrow \mathbb{R}$, endowed with the supremum norm
% \[\lVert f \rVert=\sup_{t\in [0,\varepsilon]} \{|f(t)|\},\]
% and $C^2[0, \varepsilon]$ is the space of two times differentiable functions on $[0,\varepsilon]$ with the norm
% \[\lVert f \rVert=\sup_{t\in [0,\varepsilon]} \{|f(t)|+|f'(t)|+|f''(t)|\}.\]

 The main goal is to relate variations of the pressure of the free surface to variations of the shape of the free surface. Setting $\Hb\equiv 0$ in \eqref{eq:35}, it defines the situation of an undisturbed free surface, following the curvature of the Earth, away from the Equator. This implies
\begin{equation}\label{eq:37}
\Fb(0,\Pb_0)=0,
\end{equation}
where the surface pressure distribution required to preserve the undisturbed shape is
\begin{equation}\label{eq:38}
\Pb_0(\theta)=-\frac{g}{P_{atm}}\int_a^R \rho(\xi,\theta)d\xi+\frac{1}{P_{atm}}\int_{a\cos\theta}^{R\cos\theta}\Bigg[\frac{F(y)}{y}+H(y,\theta)\Bigg] dy+\frac{\widetilde{C}(\theta)}{P_{atm}}-\frac{\sigma}{R P_{atm}}.
\end{equation}
In the subsequent, we analyze the Fr\'{e}chet derivative of $\Fb$ with respect to the first argument.
One has, 
\begin{flalign*}
    \Fb(s\Hb,\Pb_0)-\Fb(0,\Pb_0)&=-\frac{g}{P_{atm}}\int_R^{(1+s\Hb(\theta))R} \rho(\xi,\theta)d\xi\\&
    \quad+
    \frac{1}{P_{atm}}\int_{R\cos\theta} ^ {(1+s\Hb(\theta))R}\Bigg[\frac{F(y)}{y}+H(y,\theta)\Bigg] dy\\&
  \quad -  \frac{\sigma}{RP_{atm}}\left( J(s\Hb)-J(0)\right),
\end{flalign*}
where \begin{equation*}
    J(\Hb):=\frac{(1+\Hb(\theta))^2+2\Hb_\theta^2}{((1+\Hb(\theta))^2+\Hb_\theta^2)^\frac{3}{2}}-\frac{\Hb_{\theta\theta}(1+\Hb(\theta))^2}{((1+\Hb(\theta))^2+\Hb_\theta^2)^\frac{3}{2}}.
\end{equation*}
% The expression \eqref{eq:21} of azimuthal velocity implies that $\displaystyle \frac{F(y)}{y}+H(y,\theta)=(2 w(r,\theta)+\Omega\cos\theta)\cdot\Omega\rho(r,\theta)$, thus we obtain
% \[\Fb(s\Hb,\Pb_0)-\Fb(0,\Pb_0)=-\frac{g}{P_{atm}}\int_R^{(1+s\Hb(\theta))R}\rho(\widetilde{r},\theta)d\widetilde{r}+\frac{1}{P_{atm}}\int_{R\cos\theta}^{(1+s\Hb(\theta))R}((2 w(r,\theta)+\Omega\cos\theta)\cdot\Omega\rho(r,\theta)) dy\]
% \[\indent ~~-\frac{\sigma}{RP_{atm}}\cdot\Bigg[\frac{(1+\Hb(\theta))^2+2\Hb_\theta^2}{((1+\Hb(\theta))^2+\Hb_\theta^2)^\frac{3}{2}}-\frac{\Hb_{\theta\theta}(1+\Hb(\theta))^2}{((1+\Hb(\theta))^2+\Hb_\theta^2)^\frac{3}{2}}-1\Bigg]\]
From \eqref{eq:24} we find,\begin{equation*}
      \frac{F(y)}{y}+H(y,\theta)=(2 w(r,\theta)+\Omega\cos\theta)\cdot\Omega\rho(r,\theta),
\end{equation*}
whence,
 \[\lim_{s\rightarrow 0} \frac{1}{s}\left(-\frac{g}{P_{atm}}\int_R^{(1+s\Hb(\theta))R}\rho(\xi,\theta)d\xi+\frac{1}{P_{atm}}\int_{R\cos\theta}^{(1+s\Hb(\theta))R}((2 w(r,\theta)+\Omega\cos\theta)\cdot\Omega\rho(r,\theta)) dy\right)\]
 \begin{equation}\label{eq:40}
  =\frac{\rho(R,\theta)}{P_{atm}}[-gR+\Omega R\cos\theta(2w(R,\theta)+\Omega R\cos\theta)]\Hb.
 \end{equation}
Concerning  the most problematic term in obtaining the form of derivative of $\Fb$, we have
\begin{align*}
      \lim_{s\rightarrow 0}\frac{J(s\Hb)-J(0)}{s}&=
    \lim_{s\rightarrow 0} \frac{1}{s}\left(\frac{(1+s\Hb)^2-((1+s\Hb)^2+s^2\Hb_\theta^2)^\frac{3}{2}+2s^2\Hb^2}{((1+s\Hb)^2+s^2\Hb_\theta ^2)^\frac{3}{2}}\right)\\& \quad -\left(\frac{s\Hb_{\theta\theta}(1+s\Hb(\theta))^2}{((1+s\Hb(\theta))^2+s\Hb_\theta^2)^\frac{3}{2}}\right).
\end{align*}
One easily sees that \begin{equation*}
    \lim_{s\rightarrow 0} \frac{1}{s}\left(\frac{s\Hb_{\theta\theta}(1+s\Hb(\theta))^2}{((1+s\Hb(\theta))^2+s\Hb_\theta^2)^\frac{3}{2}}\right)=\Hb_{\theta \theta}.
\end{equation*}
Thus, 
\begin{align*}
  &  \lim_{s\rightarrow 0}\frac{J(s\Hb)-J(0)}{s}=
    \lim_{s\rightarrow 0} \frac{1}{s}\left(\frac{(1+s\Hb)^2-((1+s\Hb)^2+s^2\Hb_\theta^2)^\frac{3}{2}+2s^2\Hb^2}{((1+s\Hb)^2+s^2\Hb_\theta ^2)^\frac{3}{2}}\right)-\Hb_{\theta \theta}
    \\&
    =\lim_{s\rightarrow 0}\frac{(1+s\Hb)^2-((1+s\Hb)^2+s^2\Hb_\theta^2)^\frac{3}{2}}{s}-\Hb_{\theta \theta}\\&
    =\lim_{s\rightarrow 0}\frac{(1+s\Hb)^3-((1+s\Hb)^2+s^2\Hb_\theta ^2)^\frac{3}{2} -s\Hb(1+s\Hb)^2}{s}-\Hb_{\theta \theta}\\&
    =\lim_{s\rightarrow 0} \left(\frac{(1+s\Hb)^3-((1+s\Hb)^2+s^2\Hb_\theta^2)^\frac{3}{2}}{s}\right)-\lim_{s\rightarrow 0} \Hb(1+s\Hb)^2-\Hb_{\theta \theta}\\&
    =-\Hb-\Hb_{\theta \theta}+\lim_{s\rightarrow 0} \frac{(1+s\Hb)-\sqrt{(1+s\Hb)^2+s^2\Hb_\theta^2}}{s}\\& \quad \cdot\lim_{s\rightarrow 0}\left((1+s\Hb)^2+(1+s\Hb)\sqrt{(1+s\Hb)^2+s^2\Hb_\theta ^2}+(1+s\Hb)^2+s^2\Hb_\theta^2\right).
\end{align*}
Since
\[\lim_{s\rightarrow 0} \frac{(1+s\Hb)-\sqrt{(1+s\Hb)^2+s^2\Hb_\theta^2}}{s}=0,\]
and 
\[\lim_{s\rightarrow 0}\left((1+s\Hb)^2+(1+s\Hb)\sqrt{(1+s\Hb)^2+s^2\Hb_\theta ^2}+(1+s\Hb)^2+s^2\Hb_\theta^2\right)
=3,\]
we infer 
\begin{equation}\label{eq:43}
\lim_{s\rightarrow 0}\frac{J(s\Hb)-J(0)}{s}=-\Hb-\Hb_{\theta\theta}.
\end{equation}
Consequently,  relations \eqref{eq:40} and \eqref{eq:43} yields
\begin{equation}\label{eq:44}
\Db_\Hb \Fb(0, \Pb_0)(\Hb)=\frac{\rho(R,\theta)}{P_{atm}}\big[-gR+\Omega R\cos\theta(2w(R, \theta)+\Omega R\cos\theta)\big]\cdot \Hb+\frac{\sigma}{P_{atm}}\big(\Hb_{\theta\theta}+\Hb\big).
\end{equation}
Let us denote $$\gamma (\theta):=\displaystyle\frac{\rho(R,\theta)}{P_{atm}}\left(-gR+\Omega R\cos\theta(2w(R, \theta)+\Omega R\cos\theta)\right)+\frac{\sigma}{P_{atm}} $$ and $d:=\displaystyle\frac{\sigma}{P_{atm}}$.
Hence, the derivative $\Db_\Hb \Fb(0, \Pb_0)(\Hb)$  has the representation
\begin{equation}\label{ec diff principala}
    \Db_\Hb \Fb(0, \Pb_0)(\Hb)=d\Hb_{\theta\theta}+\gamma\,\Hb.
\end{equation}
~\\
Now, we are ready to state the main result of this paper.
\begin{theorem}\label{teorema homomorfism}
%One has that
%\[\Db_\Hb \Fb(0, \Pb_0): X\rightarrow C[0,\varepsilon],\]
% is a linear homeomorphism, 
 %where
 %\[X=\{u\in C^2[0,\varepsilon]\displaystyle\, : \, u(0)=0, u'(0)=0\}. \]
 Let $X$ be the space of continuous  functions on $[0,\varepsilon]$  having values and derivatives equal to zero at the point $0$ , i.e., \[X =\{u \in C^2[0, \varepsilon]: u(0)=0 \text{ and }u^\prime(0)=0\}.\]
Then,  the operator $\Db_\Hb \Fb(0, \Pb_0):X\rightarrow C[0,\varepsilon]$ is a linear homeomorphism.

\end{theorem}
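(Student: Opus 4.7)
The plan is to identify $\Db_\Hb\Fb(0,\Pb_0)$ with the differential operator $L\Hb := d\Hb_{\theta\theta}+\gamma(\theta)\Hb$ on $X$, observe that this is simply a linear second-order ODE operator with continuous coefficient, and then verify the three properties of a linear homeomorphism: continuity (boundedness), bijectivity, and continuity of the inverse.

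First I would record that $L\colon X\to C[0,\varepsilon]$ is linear and bounded. Indeed, $\gamma\in C[0,\varepsilon]$ since $\rho(R,\cdot)$ and $w(R,\cdot)$ are continuous, and one has the straightforward estimate $\|L\Hb\|_{\infty}\leq d\,\|\Hb_{\theta\theta}\|_{\infty}+\|\gamma\|_{\infty}\|\Hb\|_{\infty}\leq(d+\|\gamma\|_{\infty})\|\Hb\|_{C^2}$, where $X$ is equipped with the $C^2$-norm.

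The core of the argument is bijectivity: given $f\in C[0,\varepsilon]$, we must produce a unique $\Hb\in X$ with $d\Hb_{\theta\theta}+\gamma\Hb=f$. Because $d=\sigma/P_{atm}>0$, this ODE is non-degenerate and can be rewritten as $\Hb_{\theta\theta}=(f-\gamma\Hb)/d$. Together with the initial conditions $\Hb(0)=0$ and $\Hb_\theta(0)=0$ encoded in the definition of $X$, this is a standard linear initial value problem. The classical existence and uniqueness theorem for linear ODEs with continuous coefficients on the compact interval $[0,\varepsilon]$ (Picard--Lindelöf applied to the equivalent first-order system) delivers a unique $\Hb\in C^2[0,\varepsilon]$; since the initial data are zero, this solution lies in $X$, which simultaneously establishes injectivity (the homogeneous problem has only the trivial solution) and surjectivity.

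Finally, continuity of the inverse follows most cleanly from the Banach Isomorphism Theorem: $X$ is a closed subspace of the Banach space $C^2[0,\varepsilon]$ (defined by two continuous point evaluations), $C[0,\varepsilon]$ is Banach, and $L$ is a bounded linear bijection, so $L^{-1}$ is automatically bounded. If a quantitative bound is preferred, one may instead apply Grönwall's inequality to the first-order system $(\Hb,\Hb_\theta)' = A(\theta)(\Hb,\Hb_\theta)^T + (0,f/d)^T$ with zero initial data to get $\|\Hb\|_{\infty}+\|\Hb_\theta\|_{\infty}\leq C_\varepsilon\|f\|_{\infty}$, and then $\|\Hb_{\theta\theta}\|_{\infty}\leq d^{-1}(\|f\|_{\infty}+\|\gamma\|_{\infty}\|\Hb\|_{\infty})$, yielding an explicit constant for $\|L^{-1}f\|_{C^2}$. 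The only real subtlety is the non-degeneracy condition $d\neq 0$, which is the very reason for including surface tension in the model; without it, the problem would reduce to the algebraic equation $\gamma\Hb=f$ and could fail to be solvable whenever $\gamma$ vanishes.
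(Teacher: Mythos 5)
Your proposal is correct and follows essentially the same route as the paper: both reduce the problem to unique solvability of the linear second-order initial value problem $d\,u''+\gamma u=f$, $u(0)=u'(0)=0$, and then obtain continuity of the inverse from the Bounded Inverse Theorem. The only difference is cosmetic --- you invoke Picard--Lindel\"of for the equivalent first-order system, while the paper cites an exponential (Magnus-type) solution formula or Coddington's variation-of-parameters representation to the same end --- and your explicit remark that $d=\sigma/P_{atm}>0$ is what makes the equation non-degenerate is a point worth keeping.
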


% TEOREMA PRINCIPALA

\begin{proof}
The continuity of the operator $\Db_\Hb \Fb(0, \Pb_0)$ is straightforward. To complete the proof, we need to establish the bijectivity. This suffices for our purpose since, according to the Bounded Inverse Theorem, any linear, continuous, and bijective operator between two Banach spaces is a homeomorphism (see, e.g., \cite{inverse_theorem, kothe1983}).

To proceed with the proof of the bijectivity, we may use either \Cref{teorema_existenta_solutie_si_reprezentare} or \Cref{th coddington}. 

\textbf{1) Use of \Cref{teorema_existenta_solutie_si_reprezentare}.}
To establish the bijectivity of $\Db_\Hb \Fb(0, \Pb_0)$, it suffices to demonstrate that for any $h\in C([0,\varepsilon])$, there exists a unique $u_h\in X$ such that $\Db_\Hb \Fb(0, \Pb_0)(u_h)=h$. Note that, this is equivalent with proving that the second order differential equation   \begin{equation}\label{eq:46}
     \begin{cases}
         u^{\prime\prime}(\theta)+\frac{\gamma(\theta) }{d}u(\theta)=\frac{\varphi(\theta)}{d}, \quad \text{ on }[0,\varepsilon] \\
        u(0)=0 \\
        u^\prime(0)=0,
  \end{cases}
 \end{equation}
 has a unique solution. Following the conventional approach often employed in the literature for second-order ordinary differential equations, we can represent \eqref{eq:46} as a system of first order differential equations
  \begin{equation}\label{sistem forma vector}
   \begin{cases}
         Y'+AY=B_\varphi, \text{ on }[0,\varepsilon],\\
         Y(0)=O_2,
   \end{cases}
 \end{equation}
 where $O_2$ is the zero vector from $\mathbb{R}^2$,  \[Y=\begin{bmatrix}
     u'\\
     u
 \end{bmatrix}, 
 A=\begin{bmatrix}
     -

1 & 0\\
     0 & \frac{\gamma}{d}
\end{bmatrix} \text{ and }
B_\varphi=\begin{bmatrix}
    0\\
    \frac{\varphi}{d}
\end{bmatrix}.\]
Since $A$ and $B_h$ are continuous functions, from \Cref{teorema_existenta_solutie_si_reprezentare}, there exists a unique solution  \begin{equation*}
    Y(t)=\int_0^t e^{\int_s^tA(\xi)d\xi} B_\varphi(s) ds,
\end{equation*} to the problem \eqref{sistem forma vector}, as desired.

\textbf{2) Use of \Cref{th coddington}.} We prove that $\Db_\Hb \Fb(0, \Pb_0)$ is both injective and surjective.

\textit{Check of the injectivity:} Let $u$ be an element from X such that 
\[\Db_\Hb \Fb(0, \Pb_0)(u)=0,\]
and let $\varPhi_1, \varPhi_2$ be a basis of solutions for the  second order differential equation \eqref{ec diff principala}. Thus,  we may write 
$u=c_1\varPhi_1+c_2\varPhi_2$, for some $c_1$ and $c_2$ real numbers. 
% Since we require that  $u(0)=u^{\prime}(0)=0$, along with the relation , and therefore, $u\equiv 0$.
Since we are looking  for solutions with  $u(0)=u^{\prime}(0)=0$,  the the linear independence of $\varPhi_1$ and $\varPhi_2$ implies $c_1=c_2=0$, hence $u\equiv 0$.

\textit{Check of the surjectivity:}
Let $g\in C[0, \varepsilon]$. Following \Cref{th coddington}, there exists $\widetilde{u}$ from $C^2[0, \varepsilon]$ such that 
\begin{equation}\label{eq:452}
\Db_{\Hb}\Fb(0, \Pb_0)(\widetilde{u})=g,
\end{equation}
with the form $\widetilde{u}=u_p+c_1\varPhi_1+c_2\varPhi_2$, 
where
\[u_p(\theta)= \int_0^{\theta}\frac{ g(s) \left(\varPhi_1(\theta) W_1(s) +\varPhi_2(\theta) W_2(s)\right)}{W(\varPhi_1, \varPhi_2)(s)} ds.\]
To find the constants $c_1, c_2$, we impose $u(0)=u^\prime(0)=0$. Thus, 
 % is a particular solution of the non-homogeneous equation \eqref{eq:452}, $\{\varPhi_1, \varPhi_2\}$ is a basis for the homogeneous equation, $W(\varPhi_1, \varPhi_2)$ the Wronskian and $W_k$ the determinant obtained by replacing the k-th column $(\varPhi_k, \varPhi_k^{\prime})^t$ with $(0~1)^t$
 
% The lack of first order derivatives of the even function $\widetilde{u}$ and of evenness of $\widetilde{u}^{\prime\prime}$ implies that $\widetilde{u}(-\theta)$ is also a solution for the equation $\eqref{eq:452}$. Hence, there exists a solution from space X, namely,
% \[u(\theta):=\displaystyle\frac{\widetilde{u}(-\theta)+\widetilde{u}(\theta)}{2}.\]
% Moreover, the even solution mentioned above has the form
% \[u(\theta)=\sum_{k=1}^2\varPhi_k(\theta) \int_0^{\theta}\frac{W_k(s) g(s)}{W(\varPhi_1, \varPhi_2)(s)} ds+c_1\varPhi_1(\theta)+c_2\varPhi_2(\theta),\]
% with $\theta\in[0, \varepsilon]$ and $c_1, c_2$ real constants.\
\[0=c_1\varPhi_1(0)+c_2\varPhi_2(0)\]
and, by taking heed of the fact that $W_1(\theta)=-\varPhi_2(\theta)$ and $W_2(\theta)=\varPhi_1(\theta)$, for all $\theta\in[0,\varepsilon]$,
\[0=u'(0)=c_1\varPhi_1^\prime(0)+c_2\varPhi_2^\prime(0).\]
Due to the linear independence of $\varPhi_1$ and $\varPhi_2$, we obtain, again, that $c_1=c_2=0$. Thus, the unique solution of $\eqref{eq:452}$ will be 
\begin{equation}\label{eq:453}
    u(\theta)=u_p(\theta).
\end{equation}
Since $\varPhi_1W_1+\varPhi_2W_2=0$, we find that
\[u^{\prime}(\theta)=\sum_{k=1}^2\varPhi_k^\prime(\theta) \int_0^{\theta}\frac{W_k(s) g(s)}{W(\varPhi_1, \varPhi_2)(s)} ds ,\]
and
\begin{equation}\label{eq:454}
    u^{\prime\prime}(\theta)=\sum_{k=1}^2\left(\varPhi_k^{\prime\prime}(\theta)\int_0^{\theta}\frac{W_k(s) g(s)}{W(\varPhi_1, \varPhi_2)(s)} ds+\frac{\varPhi_k^{\prime}(\theta)W_k(\theta)g(\theta)}{W(\varPhi_1, \varPhi_2)(\theta)}\right),
\end{equation}
for all $\theta\in [0, \varepsilon].$ With this conclusion, we finish our proof.

% Therefore, since  $\Db_\Hb \Fb(0, \Pb_0)$ is linear, bijective and continuous, from Bounded Inverse Theorem (see, e.g., \cite{inverse_theorem, kothe1983}) it follows that  $\Db_\Hb \Fb(0, \Pb_0)$ is  a homeomorphism. 
% and of the inverse map follows immediately from the formulas (\ref{eq:453}),(\ref{eq:454}), and from the continuity of $\gamma(\theta)$.
\end{proof}
As a consequence of \Cref{teorema homomorfism}, we obtain the following existence result. \begin{theorem}\label{teorema principala}
    For small enough variations of $\Pb$, there exists $\Hb \in X$ such that \eqref{ec implicita} holds true.
\end{theorem}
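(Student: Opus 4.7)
The plan is to apply the classical implicit function theorem in Banach spaces to the map $\Fb \colon X \times C[0,\varepsilon] \to C[0,\varepsilon]$ at the base point $(0, \Pb_0)$. Two of the three hypotheses are already in hand: $\Fb(0,\Pb_0) = 0$ by \eqref{eq:37}, and the partial Fréchet derivative $\Db_\Hb \Fb(0,\Pb_0) \colon X \to C[0,\varepsilon]$ is a linear homeomorphism by \Cref{teorema homomorfism}. The only remaining task is to check that $\Fb$ is continuously Fréchet differentiable on some neighborhood of $(0,\Pb_0)$; once this is established, the implicit function theorem produces, for every $\Pb$ sufficiently close to $\Pb_0$, a unique $\Hb=\Hb(\Pb)\in X$ close to $0$ with $\Fb(\Hb(\Pb),\Pb)=0$, which is precisely the conclusion of \Cref{teorema principala}.

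The $C^1$-regularity of $\Fb$ splits naturally into three pieces according to the structure of \eqref{eq:36}. First, $\Fb$ depends on $\Pb$ only through the affine term $-\Pb(\theta)$, so $\Db_\Pb \Fb$ equals $-\mathrm{Id}$ and is trivially continuous. Second, the two integral contributions depend on $\Hb$ only through the variable upper limits $(1+\Hb(\theta))R$ and $(1+\Hb(\theta))R\cos\theta$; their Fréchet differentiability in $\Hb$ follows from the fundamental theorem of calculus, and continuity of the resulting derivatives is a direct consequence of the continuity of $\rho$, $F$ and $H$ together with the embedding $X \hookrightarrow C[0,\varepsilon]$. Third, the surface-tension term $J(\Hb)$ is a smooth rational expression in $\Hb$, $\Hb_\theta$, $\Hb_{\theta\theta}$ whose denominator $((1+\Hb)^2 + \Hb_\theta^2)^{3/2}$ stays uniformly bounded away from zero on a sufficiently small $C^2$-ball around $0$; a standard Nemytskii-type argument then provides continuous Fréchet differentiability from that ball into $C[0,\varepsilon]$, with the derivative at the base point recovering exactly the contribution computed in \eqref{eq:43}.

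The main technical nuisance I anticipate is the double $\theta$-dependence of the function $H(y,\theta)$ defined in \eqref{eq:25}, whose upper limit $s_0(\theta)$ and integrand both involve $\theta$; this has to be tracked carefully when differentiating the integral terms of $\Fb$ away from the base point, but the relevant computation is of the same flavor as the one already performed between \eqref{eq:26} and \eqref{eq:28}. Beyond this bookkeeping no genuinely new ideas are required, and a direct invocation of the implicit function theorem then delivers \Cref{teorema principala}.
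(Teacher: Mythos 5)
Your proposal follows exactly the paper's route: the paper proves \Cref{teorema principala} in one line by combining \Cref{teorema functiei implicie} (the implicit function theorem) with \Cref{teorema homomorfism}, applied at the base point $(0,\Pb_0)$ using \eqref{eq:37}. Your additional verification of the continuous differentiability of $\Fb$ is a sensible supplement (the paper merely asserts this after \eqref{eq:36}), but the argument is essentially the same.
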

\begin{proof}
    The conclusion follows immediately from \Cref{teorema functiei implicie} and \Cref{teorema homomorfism}.
\end{proof}

%%===================================================%%
%% For presentation purpose, we have included        %%
%% \bigskip command. please ignore this.             %%
%%===================================================%%

\section*{Appendix}
This section presents well-known results from the literature used throughout this paper. The primary result (\Cref{teorema principala}) relies on the Implicit Function Theorem (see, e.g., \cite{functionala}).
\begin{theorem}\label{teorema functiei implicie}
    Let $X,Y,Z$ be Banach spaces,  $U\subset X\times Y$ an open neighbourhood of a point $(x_0,y_0)\in X\times Y$ and let $f\colon U \to Z$ be a continuous functions. Assume that: \begin{itemize}
        \item[i) ] The function $f$ satisfies $f(x_0,y_0)=0$.
        \item[ii)] The partial derivative  $f_y(x_0,y_0)$ exists and is an linear  homeomorphism from $Y$ to $Z$.
    \end{itemize}
    Then, there exists an open neighbourhood $U_1$ of $x_0$ and a unique  $g\colon U_1 \to Y$ continuous function such that $g(x_0)=y_0$ and $f(x,g(x))=0$ on $U_1$.
\end{theorem}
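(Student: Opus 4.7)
The plan is to reformulate the equation $f(x,y)=0$ as a parameter-dependent fixed point problem and invoke the Banach contraction principle. Set $L := f_y(x_0, y_0)$; by hypothesis (ii), $L$ is a linear homeomorphism, so the bounded inverse $L^{-1}$ exists. I would define
\[
T(x, y) := y - L^{-1} f(x, y),
\]
so that $T(x,y) = y$ if and only if $f(x,y) = 0$. The objective is to produce $\delta, r > 0$ such that, for each $x$ in $U_1 := B_X(x_0,\delta)$, the map $T(x,\cdot)$ is a contraction of $\overline{B}_Y(y_0, r)$ into itself.

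For the contraction estimate, the key identity is
\[
T(x, y_1) - T(x, y_2) = L^{-1}\bigl[L(y_1 - y_2) - (f(x, y_1) - f(x, y_2))\bigr].
\]
Here I would exploit the differentiability of $f$ in $y$ together with the (local) continuity of $f_y$ at $(x_0, y_0)$: for any prescribed $\eta > 0$ there exist $\delta, r > 0$ such that $\|f_y(x,y) - L\| \le \eta$ on $\overline{B}_X(x_0,\delta)\times \overline{B}_Y(y_0,r)$, and the mean value inequality in Banach spaces yields
\[
\|f(x, y_1) - f(x, y_2) - L(y_1 - y_2)\| \le \eta\, \|y_1 - y_2\|.
\]
Choosing $\eta = 1/(2\|L^{-1}\|)$ gives a uniform contraction factor $\kappa \le \tfrac{1}{2}$. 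To ensure $T(x,\cdot)$ maps $\overline{B}_Y(y_0, r)$ into itself, I would use $T(x_0, y_0) = y_0$ (from hypothesis (i)) together with continuity of $f$ in $x$ to shrink $\delta$ so that $\|T(x, y_0) - y_0\| = \|L^{-1} f(x, y_0)\| \le r/2$ on $\overline{B}_X(x_0, \delta)$; then the triangle inequality combined with the contraction estimate gives $\|T(x,y) - y_0\| \le r$.

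Banach's fixed point theorem then produces, for each $x\in U_1$, a unique $g(x) \in \overline{B}_Y(y_0, r)$ with $f(x, g(x)) = 0$, and by construction $g(x_0) = y_0$. Continuity of $g$ would be obtained from the contraction estimate applied twice: writing $g(x_1) - g(x_2) = T(x_1, g(x_1)) - T(x_2, g(x_2))$ and inserting the intermediate term $T(x_1, g(x_2))$, I would bound
\[
\|g(x_1) - g(x_2)\| \le \kappa\,\|g(x_1) - g(x_2)\| + \|L^{-1}\|\,\|f(x_1, g(x_2)) - f(x_2, g(x_2))\|,
\]
whence $\|g(x_1) - g(x_2)\| \le 2\|L^{-1}\|\,\|f(x_1, g(x_2)) - f(x_2, g(x_2))\| \to 0$ as $x_1 \to x_2$, by continuity of $f$ in the first variable.

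The main obstacle is the contraction estimate: hypothesis (ii) literally furnishes $f_y$ only at the single point $(x_0, y_0)$, whereas the mean value step needs a uniform bound on $\|f_y - L\|$ in a neighbourhood. To make this rigorous I would interpret (ii) in the standard Hildebrandt–Graves sense (existence of a partial Fréchet derivative in $y$ on a neighbourhood, continuous at $(x_0,y_0)$), which is the version actually used in the main body of the paper; alternatively one can replace the mean value step by a direct linearization estimate $\|f(x,y) - f(x, y_0) - L(y - y_0)\| = o(\|y-y_0\|)$ obtained from the definition of the derivative at $(x_0,y_0)$ combined with continuity of $f$ in $x$, at the cost of more delicate bookkeeping in the choice of $\delta$ and $r$.
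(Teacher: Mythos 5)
The paper does not actually prove this statement: Theorem~\ref{teorema functiei implicie} is quoted in the Appendix as a known result, with a citation to Berger's \emph{Nonlinearity and Functional Analysis}, so there is no in-paper argument to compare yours against. Judged on its own, your contraction-mapping proof is the standard Hildebrandt--Graves argument and is essentially correct: the reformulation $T(x,y)=y-L^{-1}f(x,y)$, the contraction estimate via the mean value inequality, the self-mapping of $\overline{B}_Y(y_0,r)$ after shrinking $\delta$, and the continuity estimate for $g$ are all sound. One small point worth adding: Banach's theorem gives uniqueness of the fixed point only within $\overline{B}_Y(y_0,r)$, so to get the stated uniqueness of $g$ among continuous functions on $U_1$ with $g(x_0)=y_0$ you should note that any competing continuous solution must, by continuity and a connectedness argument, remain in that ball and hence coincide with $g$.

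You are also right to flag the hypothesis: as literally written, (ii) only provides $f_y$ at the single point $(x_0,y_0)$, and your main route correctly repairs this by assuming $f_y$ exists near $(x_0,y_0)$ and is continuous there (which is what the cited source assumes, and what the paper actually uses). However, your proposed fallback --- deriving the contraction from the pointwise linearization $\|f(x,y)-f(x,y_0)-L(y-y_0)\|=o(\|y-y_0\|)$ plus continuity in $x$ --- cannot be made to work, and not merely because of bookkeeping: the definition of the derivative at $(x_0,y_0)$ controls $f(x_0,\cdot)$ only, and no amount of continuity in $x$ upgrades this to the uniform estimate needed for $x\neq x_0$. Indeed the conclusion is genuinely false under pointwise differentiability alone; already in $\mathbb{R}\times\mathbb{R}$ the function $f(x,y)=y+2y^{2}\sin(1/y)-x$ (with $f(x,0)=-x$) satisfies $f(0,0)=0$ and $f_y(0,0)=1$, yet $y\mapsto f(0,y)$ is not injective on any neighbourhood of $0$ and no locally unique continuous implicit function exists. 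So the strengthened reading of (ii) is necessary, not optional, and the second half of your final paragraph should be dropped.
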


Next, we  focus on some  existence and uniqueness results for a differential equation. Let us consider the Cauchy problem \begin{equation}\label{sistem_cauchy_general}
    \begin{cases}
        X^\prime= A(t) X(t)+B(t)\\X(0)=O_n
    \end{cases} \text{~on~} [0,T],
\end{equation}
where $$X\colon \mathbb{R}\to \mathbb{R}^n, A,B\in \mathcal{M}_{n\times n}(C[0,T])$$ are unknowns, and $O_n$ is the zero vector from $\mathbb{R}^n$.
These equations have been extensively studied, and their behavior is well-established. The following result is a classic in the theory of differential equations and refers to the exponential-like representation of solutions for the system \eqref{sistem_cauchy_general}. We send to   \cite{magnus, 1968, p} for further details.  \begin{theorem}\label{teorema_existenta_solutie_si_reprezentare}
    If the matrix $A(t)$ commute with $\int_0^t A(s)ds$, i.e., 
\begin{equation*}
    A(t) \int_0^t A(s)ds =\int_0^t A(s)ds A(t),\text{ for all }t\in [0,T],
\end{equation*}
then the system \eqref{sistem_cauchy_general} has  one  $C^1$ solution and \begin{equation*}
    \Phi(t):=\int_0^t A(s)ds, 
\end{equation*}
is a fundamental matrix, $\text{ for all }t\in [0,T]$. Moreover, the solution is given by\begin{equation*}
    X(t)=\int_0^t e^{\int_s^t A(\xi)d\xi} B(s)ds, \text{ for all } t\in [0,T].
\end{equation*} 
\end{theorem}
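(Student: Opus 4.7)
The plan is to exploit the commutativity hypothesis to verify that the matrix $\Psi(t) := e^{\Phi(t)}$, with $\Phi(t) := \int_0^t A(s)\,ds$, is a fundamental matrix of the homogeneous system $Y' = A(t)Y$, and then to derive the closed-form representation of $X(t)$ via variation of parameters. Thus the stated role of $\Phi(t)$ is carried, in the customary sense, by its exponential.

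First I would establish the differential identity $\Psi'(t) = A(t)\Psi(t)$. Expanding $\Psi(t) = \sum_{k\ge 0} \Phi(t)^k/k!$ and using $\Phi'(t) = A(t)$, termwise differentiation produces
\begin{equation*}
\frac{d}{dt}\Phi(t)^k = \sum_{j=0}^{k-1}\Phi(t)^j\, A(t)\, \Phi(t)^{k-1-j}.
\end{equation*}
The commutativity assumption $A(t)\Phi(t) = \Phi(t)A(t)$ collapses every summand to $A(t)\Phi(t)^{k-1}$, yielding $\frac{d}{dt}\Phi(t)^k = kA(t)\Phi(t)^{k-1}$ and therefore $\Psi'(t) = A(t)\Psi(t)$. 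Termwise differentiation is legitimate because continuity of $A$ on $[0,T]$ yields the uniform bound $\|\Phi(t)\|\le T\max_{[0,T]}\|A\|$, making both the series and its derivative absolutely and uniformly convergent on $[0,T]$. Combined with $\Psi(0) = I$, this shows $\Psi$ is a fundamental matrix.

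Next I would invert $\Psi(t)$ via $\Psi(t)^{-1} = e^{-\Phi(t)}$ (which holds by the trivial commutativity $[\Phi(t),-\Phi(t)] = 0$) and then apply variation of parameters. Setting $X(t) = \Psi(t)Z(t)$ with $Z(0) = O_n$ and substituting into $X' = A(t)X + B(t)$, the identity $\Psi' = A\Psi$ reduces the problem to $\Psi(t)Z'(t) = B(t)$, so $Z(t) = \int_0^t \Psi(s)^{-1} B(s)\,ds$. Invoking commutativity once more to combine $\Psi(t)\Psi(s)^{-1} = e^{\Phi(t)-\Phi(s)} = e^{\int_s^t A(\xi)\,d\xi}$ gives
\begin{equation*}
X(t) = \int_0^t e^{\int_s^t A(\xi)\,d\xi}\, B(s)\,ds,
\end{equation*}
which is the required representation. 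Uniqueness of the $C^1$ solution on $[0,T]$ then follows from a standard Gronwall argument applied to the difference of two candidate solutions of the homogeneous equation.

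The main obstacle is the rigorous justification of the identity $\frac{d}{dt}\Phi(t)^k = kA(t)\Phi(t)^{k-1}$. In the general non-commuting case, the derivative of $e^{\Phi(t)}$ is governed by the much more intricate Magnus expansion, with iterated-commutator corrections, so the hypothesis $A(t)\Phi(t) = \Phi(t)A(t)$ is precisely what reduces the problem to a constant-coefficient-like computation. Once this collapsing step is secured and the analogous commutation used to identify $\Psi(t)\Psi(s)^{-1}$ with $e^{\int_s^t A(\xi)\,d\xi}$, the remainder is the classical linear-system playbook.
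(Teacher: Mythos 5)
The paper itself offers no proof of this theorem: it is quoted in the appendix as a known result, with pointers to Magnus, Martin and Precup, so there is no in-paper argument to compare yours against. Your overall strategy is the standard one, and most of it is sound: the termwise differentiation of $e^{\Phi(t)}$, the collapse of $\tfrac{d}{dt}\Phi(t)^k$ to $kA(t)\Phi(t)^{k-1}$ under the hypothesis $[A(t),\Phi(t)]=0$, the conclusion that $\Psi(t)=e^{\Phi(t)}$ is a fundamental matrix, the variation-of-parameters reduction to $X(t)=\int_0^t\Psi(t)\Psi(s)^{-1}B(s)\,ds$, and the Gronwall uniqueness argument are all correct and adequately justified.

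The gap is in the final identification $\Psi(t)\Psi(s)^{-1}=e^{\Phi(t)}e^{-\Phi(s)}=e^{\Phi(t)-\Phi(s)}$, which you justify by ``invoking commutativity once more.'' That identity requires $[\Phi(t),\Phi(s)]=0$ for $s\neq t$, and this does \emph{not} follow from the hypothesis, which only asserts that $A(t)$ commutes with $\Phi(t)$ at equal times. A concrete obstruction: in the $3\times 3$ strictly upper triangular matrices take $\Phi(t)=f(t)E_{12}+g(t)E_{23}$, so that $[\Phi'(t),\Phi(t)]=(f'g-fg')E_{13}$; choosing $f=\int_0^t a$ with $a$ supported in $(1/2,1)$ and $g=\int_0^t b$ with $b$ supported in $(0,1/2)$ and $\int_0^{1/2}b=0$ makes $f'g-fg'\equiv 0$ (on each half-interval one of the two factors in each product vanishes identically), yet $[\Phi(t),\Phi(s)]=(f(t)g(s)-f(s)g(t))E_{13}\neq 0$ for suitable $s<1/2<t$, and a direct expansion of the (nilpotent) exponentials shows $e^{\Phi(t)}e^{-\Phi(s)}-e^{\Phi(t)-\Phi(s)}=\tfrac12[\Phi(s),\Phi(t)]\neq 0$. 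So the step fails as stated; to make your argument complete you must either stop at the unimpeachable formula $X(t)=e^{\Phi(t)}\int_0^t e^{-\Phi(s)}B(s)\,ds$ (reading the theorem's $e^{\int_s^t A}$ as the transition matrix $\Psi(t)\Psi(s)^{-1}$), or add the hypothesis $[A(t),\Phi(s)]=0$ for all $s,t$ (e.g.\ pairwise commuting $A(t)$), under which your computation goes through verbatim. In the body of the paper the matrix $A$ is diagonal, so all of these commutators vanish and the formula is applied correctly there; the defect is only in the general statement and in the unguarded commutativity appeal at the end of your proof.
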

A similar existence result is concerned with the representation of the solution for a general linear differential equation of arbitrarily order $p>0$ on some interval $I$. We consider the linear differential operator \begin{equation*}
    L(u)(x)=\sum_{i=0}^p a_i(x)u^{(i)}(x), \quad x\in I,
\end{equation*}
where $a_{i}$ are continuous functions on $I$, and $u$ of $C^p(I)$ class. 
\begin{theorem}[{\cite[Chapter~3, Th. 11]{Coddington}}]\label{th coddington}
    Let $b$ be a continuous function on $I$. Then, every solution of the equation $Lu=b$ can be written as \begin{equation*}
        u=u_p+\sum_{i=0}^p c_i \phi_i,
    \end{equation*}
    where  $\{\phi_i\}_{0\leq i\leq p}$  is a basis for the solutions of $Lu=0$ and $u_p$ is a particular solution of $Lu=b$. Additionally, we may look for  $u_p$ in the form \begin{equation*}
        u_p=\sum_{i=0}^p \phi_i \int_{x_0}^x \frac{W_k(t)b(t)}{W(t)},
    \end{equation*}
    where $W$ is the Wronskian of the  basis $\{\phi_i\}_{0\leq i\leq p}$, and $W_k$ is the Wronskian obtained by replacing the $k$th column of $W$ with $(0,\ldots,0,1)$.
\end{theorem}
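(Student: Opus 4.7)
The plan is to decompose the theorem into its two assertions and handle each by a classical argument. The first assertion --- that every solution of $Lu=b$ differs from a fixed particular solution by an element of $\ker L$ --- follows at once from linearity. The second assertion --- the explicit Wronskian formula for $u_p$ --- is produced by the variation of parameters method.

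For the first part, I would fix any particular solution $u_p$ of $Lu=b$, take an arbitrary solution $u$, and observe that by linearity $L(u - u_p) = b - b = 0$. Hence $u - u_p \in \ker L$. Since $L$ is an order-$p$ linear differential operator with leading coefficient $a_p$ nonvanishing on $I$, a standard reduction to a first-order linear system (cf.\ \Cref{teorema_existenta_solutie_si_reprezentare}) gives existence and uniqueness of solutions with prescribed initial data at any $x_0 \in I$, forcing $\dim \ker L$ to equal the order of the operator. Expanding $u - u_p$ in the basis $\{\phi_i\}$ then yields the claimed representation $u = u_p + \sum_i c_i \phi_i$ with uniquely determined constants $c_i$.

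For the explicit formula, I would adopt the classical variation-of-parameters ansatz $u_p(x) = \sum_i c_i(x)\phi_i(x)$, treating the $c_i$ as unknown functions. A single equation ($Lu_p = b$) cannot pin down all of them, so I would impose the usual auxiliary constraints
\begin{equation*}
\sum_i c_i'(x)\, \phi_i^{(j)}(x) = 0, \quad j = 0, 1, \ldots, p-2.
\end{equation*}
A straightforward induction shows that, under these constraints, each derivative $u_p^{(k)}$ with $k \leq p-1$ takes the clean form $\sum_i c_i(x)\phi_i^{(k)}(x)$, with no $c_i'$ terms surviving. Substituting into $Lu_p = b$ and using $L\phi_i = 0$ for each $i$ reduces the equation to $\sum_i c_i'(x)\phi_i^{(p-1)}(x) = b(x)/a_p(x)$. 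Combined with the previous auxiliary constraints, this is a linear system for the vector $(c_1',\ldots,c_p')$ whose coefficient matrix is the Wronskian matrix of $\{\phi_i\}$. Cramer's rule yields $c_k'(x) = W_k(x) b(x)/W(x)$ (under the normalization $a_p \equiv 1$ implicit in the statement), and integration from $x_0$ produces the claimed integral representation.

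The main obstacle, and what keeps the construction from being purely formal, is guaranteeing that the procedure is valid throughout $I$: one needs $W(x) \neq 0$ everywhere on $I$ so that the Wronskian system is uniformly solvable. This follows from Abel's identity,
\begin{equation*}
W(x) = W(x_0)\exp\!\Bigl(-\int_{x_0}^x \frac{a_{p-1}(s)}{a_p(s)}\,ds\Bigr),
\end{equation*}
which implies that $W$ cannot vanish anywhere once it is nonzero at a single point --- and nonvanishing at one point is ensured by the linear independence of $\{\phi_i\}$ combined with the uniqueness part of the Cauchy problem. Once this is in hand, verifying that the $u_p$ produced by the formula actually satisfies $Lu_p = b$ is just a re-run of the derivative computation above, with the auxiliary constraints built in by construction.
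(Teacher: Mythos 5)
Your argument is correct, but note that the paper does not prove this statement at all: it is quoted verbatim from the literature (Coddington, Chapter~3, Theorem~11) and used as a black box in the proof of Theorem~\ref{teorema homomorfism}. What you have written is precisely the standard proof from that source --- linearity to reduce to the kernel, dimension count via the equivalent first-order system, variation of parameters with the usual auxiliary constraints, Cramer's rule to identify $c_k' = W_k b/W$, and Abel's identity to guarantee $W \neq 0$ on all of $I$ --- so there is nothing to compare beyond observing that you have supplied the omitted classical argument faithfully. The only friction is with the statement itself rather than your proof: as written the basis is indexed $0 \leq i \leq p$, giving $p+1$ kernel elements for an operator whose kernel is $p$-dimensional, and the nonvanishing of the leading coefficient $a_p$ (or the normalization $a_p \equiv 1$) is left implicit; you correctly flag and handle both points.
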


\end{document}